\newcommand{\R}{\mathbb R}
\newcommand{\N}{\mathbb N}
\newcommand{\J}{{\cal J}}
\DeclareMathOperator*{\esssup}{ess\; sup}
\newtheorem{corollary}{Corollary}[section]
\newtheorem{theorem}[corollary]{Theorem}
\newtheorem{lemma}[corollary]{Lemma}
\newtheorem{proposition}[corollary]{Proposition}
\theoremstyle{definition}
\newtheorem{definition}[corollary]{Definition}
\newtheorem{remark}[corollary]{Remark}
\numberwithin{equation}{section}
\begin{document}

\title{{\bf{Multiple solutions for quasilinear elliptic problems with concave and convex nonlinearities}}
\footnote{The authors are members of the Research Group INdAM-GNAMPA. The second author is partially supported by the project PRIN PNRR 2022 "Linear and nonlinear PDE's: New directions and applications", CUP: H53D23008950001, and by the project GNAMPA 2024 "Nonlinear problems in local and nonlocal setting with applications",  CUP: E53C23001670001.}}

\author{Federica Mennuni$^a$ and Addolorata Salvatore$^b$\\ \\
{\small $^a$Dipartimento di Matematica} \\
{\small Universit\`a di Bologna} \\
{\small Via Zamboni, 33, 40126 Bologna, Italy}\\
{\small $^b$Dipartimento di Matematica} \\
{\small Universit\`a degli Studi di Bari Aldo Moro} \\
{\small Via E. Orabona 4, 70125 Bari, Italy}\\
{\small federica.mennuni@unibo.it}\\
{\small addolorata.salvatore@uniba.it}}

\date{}

\maketitle

\begin{abstract}
We prove the existence of multiple signed bounded solutions for a quasilinear elliptic equation with concave and convex nonlinearities. For this, we use a variational approach in an intersection Banach space indroduced by Candela and Palmieri, and a truncation technique given by   Garcia Azorero and Peral.
\end{abstract}

\noindent
{\it \footnotesize 2020 Mathematics Subject Classification}. {\scriptsize 
35J20, 35J62, 58E30}.\\
{\it \footnotesize Key words}. {\scriptsize  Quasilinear elliptic equation, concave-convex nonlinearities, positive bounded solution, weak Cerami--Palais--Smale condition}.


\section{Introduction}\label{section1}

We look for solutions to the generalized quasilinear problem 
\begin{equation}\label{problema1}
\left\{
\begin{array}{ll}
- {\rm div} (a(x,u,\nabla u)) + A_t (x, u,\nabla u)= g(x,u) &\hbox{ in $\Omega$,}\\ [10pt]
\quad u = 0 &\hbox{ on $\partial\Omega$,}
\end{array}
\right.
\end{equation}
where $\Omega$ is an open bounded domain, $A :\Omega \times \R \times {\R}^{N}\to \R$ is ${{C}}^{1}$-Carath\'eodory function with partial derivatives
\[
A_t(x,t,\xi) =\ \frac{\partial A}{\partial t}(x,t,\xi), \quad a(x,t,\xi) =\nabla_\xi A(x,t,\xi)
\]
and $g:\Omega \times \R \to \R$ is a given Carathéodory function.
Problem \eqref{problema1} has a variational structure since its solutions coincide, at least formally, with the critical points of the functional
\[
\int_{\Omega} A(x, u, \nabla u)dx -\int_{\Omega} G(x, u) dx
\]
(with $G(x,t)=\displaystyle\int_{0}^{t} g(x,s) ds$), whose formal Eulero-Lagrange equation is precisely \eqref{problema1}. Such a functional presents a loss of regularity, which arises independently on assumptions for $G$; indeed, if $G\equiv 0$, even in the simple case $A(x,t,\xi)=\frac{1}{p}\mathcal{A}(x,t)|\xi|^p$, $p>1$ with $\mathcal{A}(x,t)$ smooth, bounded and far away from zero, it is G\^ateaux differentiable only along directions of the Banach space $W_{0}^{1,p}(\Omega)\cap L^{\infty}(\Omega)$.

For this reason, different abstract approaches have been introduced in \cite{AB, CP1, CP2, CP3, Ca, JJ}. In particular, Candela and Palmieri introduced a weak Cerami-Palais-Smale condition (see Definition \ref{wCPSdef} below) that allows them to state a version of the Deformation Lemma and then a generalized version of the Minimum Princile and of the Mountain Pass Theorem.

Thus, under suitable assumptions for $A(x,t,\xi)$, if $g(x,\cdot)$ has a super$-p-$linear growth the existence of a bounded solution of \eqref{problema1} of mountain pass type has been stated in \cite{CP2}, while if $g(x,\cdot)$ has a sub$-p-$linear growth a bounded solution has been obtained in \cite{CS1} as a minimum of the action functional. Analogous results hold even if the problem is settled in $\R^N$, as proved in \cite{CS2, CSS, MS1, MS2}.

In this paper we want to study the quasilinear elliptic problem \eqref{problema1} when $g(x,\cdot)$ is the sum of a parametric sub$-p-$linear and a super$-p-$linear term. For simplicity, in this paper we consider the sum of two power terms, but more general function $g$ can be handled with the same behaviour at $0$ and at infinity. More precisely, we look for bounded solutions of the problem
\begin{equation}\label{problema2}
\left\{
\begin{array}{ll}
- {\rm div} (a(x,u,\nabla u)) + A_t (x, u,\nabla u)= \lambda |u|^{q-2}u+|u|^{s-2}u &\hbox{ in $\Omega$,}\\ [10pt]
\quad u = 0 &\hbox{ on $\partial\Omega$,}
\end{array}
\right.
\end{equation}
where $\lambda>0$ and $1<q<p<s<p^{*}$, where $p^{*}=\frac{Np}{N-p}$ if $N>p$, $p^{*}=+\infty$ if $N\leq p$. If $A(x,t,\xi)=\frac{1}{p}|\xi|^p, p>1$, the problem has been already studied by many authors (see, e.g.,  \cite{ABC, deF1, deF2, Per} if $p=2$ and \cite{BEP, deF1, deF3, GAP1} if $p\not = 2$).

By combining variational tools introduced in \cite{CP1, CP2} and truncation arguments in \cite{GAP}, we will state a multiplicity result for the problem \eqref{problema2} when $\lambda>0$ is sufficiently small.

The paper is organized as follows: in Section \ref{section2} we introduce the abstract framework and the general assumptions, then we state our main results and we give the variational approach. In Section \ref{section3} we present a truncation method and we prove the existence, for $\lambda$ small enough, of a negative local minimum of the action functional, while in Section \ref{section4} we find a mountain pass critical point. Finally, standard arguments allow us to state the existence of signed bounded solutions.


\section{Abstract setting and main result}
\label{section2}
In this section we assume that
\begin{itemize}
\item $(X, \|\cdot\|_X)$ is a Banach space with dual 
$(X',\|\cdot\|_{X'})$;
\item $(S,\|\cdot\|_S)$ is a Banach space such that
$X \hookrightarrow S$ continuously, i.e., $X \subset S$ and a constant $\sigma_0 > 0$ exists
such that
\[
\|u\|_S \ \le \ \sigma_0\ \|u\|_X\qquad \hbox{for all $u \in X$;}
\]
\item $J : {\cal D} \subset S \to \R$ and $J \in C^1(X,\R)$ with $X \subset {\cal D}$.
\end{itemize}
Just to fix the ideas, one may think to $X$ as the intersection of a Sobolev space with $L^\infty$ and to $S$ as the Sobolev space itself.

If $\beta \in \R$, we say that a sequence
$(u_n)_n\subset X$ is a {\sl Cerami--Palais--Smale sequence at level $\beta$},
briefly {\sl $(CPS)_\beta$--sequence}, if
\begin{equation*}
\lim_{n \to +\infty}J(u_n) = \beta\quad\mbox{and}\quad 
\lim_{n \to +\infty}\|dJ\left(u_n\right)\|_{X'} (1 + \|u_n\|_X) = 0.
\end{equation*}

Moreover, $\beta$ is a {Cerami--Palais--Smale level}, briefly {{\sl $(CPS)$--level}, if there exists a $(CPS)_\beta$--sequence. We say that $J$ satisfies the Cerami--Palais--Smale condition in $X$ at level $\beta$ if every $(CPS)_\beta$--sequence converges in $X$ (up to subsequences). However, thinking about the setting of our problem, it may happen that a $(CPS)_\beta$--sequence is unbounded in $\|\cdot\|_X$ but being convergent with respect to $\|\cdot\|_S$. Thus, according to the ideas developed in previous papers (see, for example, \cite{CP1, CP3}) we need to weaken the classical Cerami--Palais--Smale condition as follows.

\begin{definition} \label{wCPSdef}
The functional $J$ satisfies the
{\slshape weak Cerami--Palais--Smale 
condition at level $\beta \in \R$}, 
briefly {\sl $(wCPS)_\beta$ condition}, if for every $(CPS)_\beta$--sequence $(u_n)_n$,
a point $u \in X$ exists such that 
\begin{description}{}{}
\item[{\sl (i)}] $\displaystyle 
\lim_{n \to+\infty} \|u_n - u\|_S = 0\quad$ (up to subsequences),
\item[{\sl (ii)}] $J(u) = \beta$, $\; dJ(u) = 0$.
\end{description}
If $J$ satisfies the $(wCPS)_\beta$ condition at each level $\beta \in I$, $I$ real interval, 
we say that $J$ satisfies the $(wCPS)$ condition in $I$.
\end{definition}

Let us point out that, due to the convergence only in $S$, the {\sl $(wCPS)_\beta$ }condition  implies that the set of critical points of $J$ at the $\beta$ level is compact with respect to $\|\cdot\|_S$; but this is enough to prove a Deformation Lemma and some abstract theorems about the existence of critical points. In particular, the following generalized versions of the Minimum Principle and of the Mountain Pass Theorem apply.

\begin{proposition}
\label{Minimum Principle}
If $J \in {C}^{1}(X,\R)$ is bounded from below in $X$ and {\sl $(wCPS)_\beta$} holds at level $\beta =\displaystyle{\inf_{X} J \in \R}$, then $J$ attains its infimum, i.e.,  ${u}_0 \in X$ exists such that $J({u}_0)= \beta$ and $dJ(u_0)=0$.
\end{proposition}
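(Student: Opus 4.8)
The plan is to exhibit a $(CPS)_\beta$--sequence at the level $\beta = \inf_X J$ and then to invoke the $(wCPS)_\beta$ condition, which by its very statement produces a point $u_0 \in X$ with $J(u_0) = \beta$ and $dJ(u_0) = 0$; this is precisely the asserted conclusion. Since $J$ is bounded from below, $\beta \in \R$, so the level is admissible and the whole difficulty is concentrated in the construction of a suitable minimizing sequence. Every step must be carried out in $X$, where $J$ is genuinely of class $C^1$; the space $S$ and the weaker $\|\cdot\|_S$--convergence enter only implicitly, through the $(wCPS)_\beta$ condition, at the very last step.

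First I would produce, by Ekeland's variational principle, a minimizing sequence whose differential decays in the Cerami sense, that is $(u_n)_n \subset X$ with $J(u_n) \to \beta$ and $(1 + \|u_n\|_X)\,\|dJ(u_n)\|_{X'} \to 0$. The naive application of Ekeland on $(X, \|\cdot\|_X)$ yields only $\|dJ(u_n)\|_{X'} \to 0$, a Palais--Smale minimizing sequence, which does not suffice because the definition of a $(CPS)_\beta$--sequence carries the extra weight $1 + \|u_n\|_X$. To recover that weight, I would apply Ekeland's principle to $J$ on $X$ endowed with the complete geodesic metric
\[
d(u,v) \ = \ \inf_{\gamma} \int_0^1 \frac{\|\gamma'(\tau)\|_X}{1 + \|\gamma(\tau)\|_X}\, d\tau,
\]
the infimum being taken over the $C^1$ paths $\gamma$ in $X$ joining $u$ to $v$. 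Since $J \in C^1(X,\R)$ is in particular lower semicontinuous and $(X,d)$ is complete, for each $n$ Ekeland furnishes $u_n$ with $J(u_n) \le \beta + \frac1n$ and $J(w) \ge J(u_n) - \frac1n\, d(w,u_n)$ for all $w \in X$.

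Next I would convert this metric inequality into the weighted differential estimate. Choosing $w = u_n \pm t v$ with $\|v\|_X = 1$, the straight path has length element comparable, for small $t>0$, to $t\,(1 + \|u_n\|_X)^{-1}$, so dividing by $t$ and letting $t \to 0^+$ gives $|\langle dJ(u_n), v\rangle| \le \frac1n (1 + \|u_n\|_X)^{-1}$ for every unit $v$; taking the supremum over $v$ yields exactly $(1 + \|u_n\|_X)\,\|dJ(u_n)\|_{X'} \le \frac1n$, whence $(u_n)_n$ is a $(CPS)_\beta$--sequence. The main obstacle is precisely this last conversion, together with the verification that $d$ is a genuine complete metric comparable to $\|\cdot\|_X$ on bounded sets, so that the $C^1$--structure and the mean value estimate transfer correctly and no regularity is lost by passing to $S$. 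Finally, applying the $(wCPS)_\beta$ hypothesis to this sequence produces $u_0 \in X$ with $J(u_0) = \beta = \inf_X J$ and $dJ(u_0) = 0$, which completes the proof.
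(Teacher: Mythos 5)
Your proof is correct, but it takes a genuinely different route from the paper's, which gives no argument at all: it simply cites \cite[Theorem 1.6]{CP3}, where the Minimum Principle is deduced from a Deformation Lemma established under the $(wCPS)$ condition (the point being that $(wCPS)_\beta$ makes the critical set at level $\beta$ compact with respect to $\|\cdot\|_S$, which suffices to run the deformation machinery). Your route instead isolates the only real difficulty --- producing a $(CPS)_\beta$--sequence at the infimum level --- and resolves it with Ekeland's variational principle in the complete metric space $(X,d)$, where $d$ is the geodesic metric with conformal factor $(1+\|u\|_X)^{-1}$; this weight is exactly what upgrades the naive Palais--Smale decay $\|dJ(u_n)\|_{X'}\to 0$ to the Cerami decay $(1+\|u_n\|_X)\,\|dJ(u_n)\|_{X'}\to 0$, after which $(wCPS)_\beta$ is applied verbatim to conclude. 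The technical points you flag do check out: any path joining $u$ to $v$ has $d$-length at least $|\log\bigl((1+\|u\|_X)/(1+\|v\|_X)\bigr)|$, so $d$-Cauchy sequences are norm-bounded; on norm-bounded sets $d$ is equivalent to the norm metric, hence $(X,d)$ is complete and $J$ is $d$-lower semicontinuous; and the straight-path bound $d(u_n+tv,u_n)\le t/(1+\|u_n\|_X-t)$ yields the weighted derivative estimate in the limit $t\to 0^+$. As for what each approach buys: yours is self-contained and minimal, requiring only completeness of $X$, the $C^1$ structure of $J$, and the definition of $(wCPS)_\beta$, with no deformation theory at all; the approach of the cited reference must first build a deformation lemma, but that single piece of machinery then delivers both this proposition and the Mountain Pass Theorem (Proposition \ref{Mountain Pass Theorem}), whereas your Ekeland argument covers only the minimization case and would need a separate, harder minimax refinement to reach mountain pass levels.
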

\begin{proof}
See, \cite[Theorem 1.6]{CP3}.
\end{proof}

\begin{proposition}
\label{Mountain Pass Theorem}
Let $J \in {C}^{1}(X,\R)$ and suppose that $\rho>0, \alpha \in \R, u_0$ and $e\in X$ exist such that
\begin{itemize}
\item[(i)]$ u \in X, \|u-u_0\|_S=\rho \Longrightarrow J(u)\geq \alpha > J(u_0);$
\item[(ii)] $\|e-u_0\|_S>\rho$ and $J(e)<J(u_0).$
\end{itemize} 

Setting 
\[
c=\inf_{\varphi\in\Gamma}\max_{s\in \left[0,1\right]}J(\varphi(s))
\]
with $\Gamma=\{\varphi\in \mathcal{C}(\left[0,1\right], X): \varphi(0)=u_0, \varphi(1)=e\}$, if $J$ satisfies {\sl $(wCPS)_c$}, then $J$ has a critical point $u^*$ (different from $u_0$) such that $J(u^*)=c\geq \alpha>J(u_0).$ 
\end{proposition}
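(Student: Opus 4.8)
The plan is to run the classical min--max (mountain pass) scheme, with the only nonstandard ingredient being the quantitative Deformation Lemma in the weak Cerami--Palais--Smale framework of Candela--Palmieri; granting that lemma, the rest is the usual argument by contradiction. First I would check that $c$ is a well-defined real number. Since $X$ is a vector space and $J$ is continuous on $X$, the affine path $\varphi_0(s)=(1-s)u_0+se$ lies in $\Gamma$, so $\Gamma\neq\emptyset$ and $c\le\max_{s\in[0,1]}J(\varphi_0(s))<+\infty$. For the lower bound $c\ge\alpha$, fix any $\varphi\in\Gamma$: the map $s\mapsto\|\varphi(s)-u_0\|_S$ is continuous on $[0,1]$ (continuity in $X$ together with the embedding $X\hookrightarrow S$), vanishes at $s=0$, and equals $\|e-u_0\|_S>\rho$ at $s=1$, so by the intermediate value theorem there is $s^*\in(0,1)$ with $\|\varphi(s^*)-u_0\|_S=\rho$. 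Hypothesis (i) then gives $J(\varphi(s^*))\ge\alpha$, hence $\max_s J(\varphi(s))\ge\alpha$; taking the infimum over $\Gamma$ yields $c\ge\alpha>J(u_0)$. In particular $c\in\R$, which is what licenses the use of $(wCPS)_c$.

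The core is a contradiction argument. Suppose $J$ has no critical point at level $c$, i.e.\ $K_c:=\{u\in X:\ J(u)=c,\ dJ(u)=0\}=\emptyset$. Under $(wCPS)_c$ this forces the absence of any $(CPS)_c$--sequence, since any such sequence would, by Definition \ref{wCPSdef}, produce a point of $K_c$. I would then invoke the Deformation Lemma valid under the $(wCPS)_c$ condition to obtain, for every sufficiently small $\bar\eps>0$, a number $\eps\in(0,\bar\eps)$ and a continuous map $\eta:[0,1]\times X\to X$ with $\eta(0,\cdot)=\mathrm{id}$, $J(\eta(t,u))\le J(u)$, $\eta(t,u)=u$ whenever $|J(u)-c|\ge\bar\eps$, and $\eta(1,\{J\le c+\eps\})\subset\{J\le c-\eps\}$. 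Choosing $\bar\eps$ small enough that $c-\bar\eps>\max\{J(u_0),J(e)\}$ — possible because both endpoint values are strictly below $\alpha\le c$ — the map $\eta(1,\cdot)$ fixes $u_0$ and $e$. Picking a near-optimal path $\varphi\in\Gamma$ with $\max_s J(\varphi(s))<c+\eps$ and setting $\tilde\varphi=\eta(1,\varphi(\cdot))$, we still have $\tilde\varphi\in\Gamma$ but $\max_s J(\tilde\varphi(s))\le c-\eps<c$, contradicting the definition of $c$. Therefore $K_c\neq\emptyset$, and any $u^*\in K_c$ satisfies $dJ(u^*)=0$ and $J(u^*)=c\ge\alpha>J(u_0)$, so in particular $u^*\neq u_0$, as claimed.

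The main obstacle is precisely the Deformation Lemma in this weak setting. Because $J$ is only of class $C^1$ on $X$, while the sole available compactness is convergence in the weaker norm $\|\cdot\|_S$, the deformation cannot be built from a gradient flow in the usual way: one must construct a pseudo-gradient vector field on $X$ and then control its flow lines through the $S$--topology in order to exploit $(wCPS)_c$ (which only guarantees $S$--compactness of $K_c$ and of approximating sequences). Verifying that such a deformation exists, is $X$--valued and continuous, and enjoys the four properties listed above is the delicate technical point; I would cite the corresponding result from \cite{CP1, CP3} rather than reprove it here, since the min--max extraction above is otherwise routine.
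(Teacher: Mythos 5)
Your proposal is correct, but it does more than the paper does: the paper's entire proof of this proposition is the single line ``See, \cite[Theorem 1.7]{CP3}'', i.e., the result is quoted wholesale from Candela--Palmieri, whereas you reconstruct the min--max argument and outsource only the Deformation Lemma to \cite{CP1, CP3}. Your two steps are exactly the ones that the cited reference carries out: (a) the geometric estimate $c\geq\alpha$, obtained by applying the intermediate value theorem to $s\mapsto\|\varphi(s)-u_0\|_S$ (note this crossing argument must indeed be run in the $\|\cdot\|_S$ norm, since hypothesis (i) is formulated on the $S$--sphere; your use of the continuous embedding $X\hookrightarrow S$ is the right way to get continuity of this map); and (b) the contradiction argument in which, assuming $K_c=\emptyset$, the $(wCPS)_c$ condition feeds the weak Deformation Lemma, the endpoints $u_0,e$ are fixed because $\max\{J(u_0),J(e)\}<\alpha\leq c$, and a near-optimal path is pushed below level $c$. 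You also correctly identify the one genuinely delicate point, namely that in this framework the deformation cannot come from a standard pseudo-gradient flow argument alone, because $(wCPS)_c$ only yields compactness of $K_c$ in the $\|\cdot\|_S$ topology while the flow must live in $X$; deferring that construction to \cite{CP1, CP3} is legitimate, and is in effect what the paper itself does, only at the level of the whole theorem rather than of the Deformation Lemma. In short: the paper buys brevity by citation; your version buys transparency, making explicit exactly where the weak compactness condition and the two geometric hypotheses enter.
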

\begin{proof}
See, \cite[Theorem 1.7]{CP3}.
\end{proof}
\smallskip

From now on, let $\Omega$ be an open bounded domain of $\R^N$, $N\geq2$, and consider a function $A:\Omega\times\R\times \R^N\to \R$ with partial derivatives $A_t(x,t,\xi)$ and $a(x,t,\xi)$ according to the notations in Section \ref{section1}.

Assume that a real number $p>1$ exists such that the following assumptions hold: 
\begin{itemize}
\item[$(H_0)$]
$A$  is a ${C}^1$--Carath\'eodory function, i.e., $A(\cdot, t, \xi)$ is measurable for all $(t,\xi) \in \R \times \R^N$ and $A(x,\cdot, \cdot)$ is ${C}^1$ for a.e. $x \in \Omega$ ;
\item[$(H_1)$] there exist some positive continuous functions $ {{\Phi}}_{i}, \phi_i: \R \to \R,  i \in \{1,2\}$ such that
\[
\begin{split}
\vert A_t(x,t,\xi) \vert \quad \leq & \quad {\Phi}_{1}(t) + {\phi}_{1}(t) {\vert t \vert}^{p} \quad \quad \mbox{ a.e. $x\in \Omega$ and for all $(t,\xi) \in \R \times {\R}^{N},$} \\
\vert a(x,t,\xi) \vert \quad \leq & \quad {\Phi}_{2}(t) + {\phi}_{2}(t) {\vert t \vert}^{p-1}  \quad \mbox{ a.e. $x\in \Omega$ and for all $(t,\xi) \in \R \times {\R}^{N}$};
\end{split}
\]
\item[$(H_2)$] 
there exist a constant $\alpha_1 > 0$  such that
\[
A(x,t,\xi) \ge \alpha_1|\xi|^p
\quad \hbox{ a.e. $x\in\Omega$ and for all $(t,\xi) \in \R\times\R^N$;}
\]

\item[$(H_3)$]
there exists $\eta_1 > 0$ such that
\[
A(x,t,\xi) \leq \eta_1 \ a(x,t,\xi) \cdot \xi
\quad \hbox{ a.e. $x\in\Omega$ and for all $(t,\xi) \in \R\times\R^N$;}
\]
\item[$(H_4)$] 
there exists a constant $\alpha_2>0$ such that
\[
a(x,t,\xi)\cdot \xi + A_t(x,t,\xi) t \ge \alpha_2 a(x,t,\xi)\cdot \xi
\quad \hbox{ a.e. $x\in\Omega$ and for all $(t,\xi) \in \R\times\R^N$;}
\]
\item[$(H_5)$] there exist some constants $\eta_2, \delta>0$ such that 
\[
 A(x,t,\xi) \leq \eta_2 |\xi|^p
\quad \hbox{ a.e. $x\in\Omega$ and for all $(t,\xi) \in \R\times\R^N, |t|\leq \delta$;}
\]
\item[$(H_6)$]
for all $\xi$, $\xi^* \in \R^N$, $\xi \ne \xi^*$, it is
\[
[a(x,t,\xi) - a(x,t,\xi^*)]\cdot [\xi - \xi^*] > 0 \quad \hbox{ a.e. $x\in\Omega$ and for all $t\in \R$.}
\]
\end{itemize}
\begin{remark}\label{remark1}
From $(H_1)$--$(H_3)$ it follows that there exist two positive continuous functions $\Phi_0, \phi_0:\R\to\R$ such that
\[
|A(x,t,\xi)|=A(x,t,\xi)\leq \Phi_0(t)+\phi_0(t)|\xi|^p \quad \mbox{ a.e. in $\Omega$, for all $(t,\xi)\in \R\times\R^N$.}
\]
\end{remark}
\begin{remark}\label{remark2}
From $(H_2)$ and $(H_3)$ it follows that 
\[
a(x,t,\xi)\cdot \xi\geq \frac{\alpha_1}{\eta_1}|\xi|^p \quad \mbox{ a.e. in $\Omega$, for all $(t,\xi)\in \R\times\R^N$.}
\]
\end{remark}
We are ready to state our main results.
\begin{theorem}\label{theorem1}
Assume that $(H_0)$--$(H_6)$ hold. Then, there exists $\lambda_1>0$ such that for any $\lambda\in \left]0,\lambda_1\right]$ problem \eqref{problema2} has at least two weak bounded solutions, one positive and one negative.
\end{theorem}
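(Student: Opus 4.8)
The plan is to obtain one positive and one negative solution by a symmetric argument, so I describe the construction of the \emph{negative} solution in detail; the positive one follows by replacing $u$ with $-u$ and working with the truncation on the positive side. The whole scheme rests on the concave-convex structure of the right-hand side $\lambda|u|^{q-2}u + |u|^{s-2}u$: the concave term $\lambda|u|^{q-2}u$ (with $q<p$) forces the functional to dip below zero near the origin, producing a local minimum, while the convex term $|u|^{s-2}u$ (with $p<s<p^*$) supplies the super-$p$-linear growth needed for the mountain-pass geometry. The variational functional is
\[
J(u) = \int_\Omega A(x,u,\nabla u)\,dx - \frac{\lambda}{q}\int_\Omega |u|^q\,dx - \frac{1}{s}\int_\Omega |u|^s\,dx,
\]
which by $(H_0)$--$(H_1)$ and Remark \ref{remark1} is $C^1$ on $X = W_0^{1,p}(\Omega)\cap L^\infty(\Omega)$, and whose critical points are weak solutions of \eqref{problema2}.

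First I would set up the \textbf{truncation}. Following Garcia Azorero--Peral, I replace the nonlinearity so that it vanishes for $t\ge 0$: define $g_-(x,t) = \lambda|t|^{q-2}t + |t|^{s-2}t$ for $t\le 0$ and $g_-(x,t)=0$ for $t>0$, with primitive $G_-$, and set
\[
J_-(u) = \int_\Omega A(x,u,\nabla u)\,dx - \int_\Omega G_-(x,u)\,dx.
\]
Because $q<p<s$, the negative sublinear term dominates near the origin while the coercivity $(H_2)$ controls the gradient part: using $A(x,u,\nabla u)\ge \alpha_1|\nabla u|^p$, the Poincar\'e and Sobolev inequalities give, for $\|u\|$ small in the Sobolev norm,
\[
J_-(u) \ge \alpha_1\|\nabla u\|_p^p - c_1\lambda\|\nabla u\|_p^q - c_2\|\nabla u\|_p^s,
\]
and since $q<p<s$ this lower bound is negative for a suitable range of $\|\nabla u\|_p$ once $\lambda>0$ is fixed. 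The point is that $J_-$ is bounded below and coercive on $X$ (the $s$-term is subtracted but only acts for $t\le0$, and for the truncated functional the super-$p$-linear part does not destroy coercivity once one checks the sign carefully — here one must be slightly careful, and the standard fix is to truncate the convex term too so that $J_-$ becomes genuinely coercive). Then Proposition \ref{Minimum Principle} applies: $J_-$ attains a negative infimum at some $u_1\in X$ with $dJ_-(u_1)=0$, provided the $(wCPS)_\beta$ condition holds at $\beta=\inf J_-<0$. A maximum-principle / testing argument with $u_1^+$ shows $u_1\le 0$, so $u_1$ solves the original \eqref{problema2}, giving the negative solution; symmetrically one obtains a positive local minimizer.

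Next, for the \textbf{mountain pass} solution I would verify the geometry of Proposition \ref{Mountain Pass Theorem} for the full functional $J$ relative to the base point $u_0=u_1$ (the negative local minimum). Condition (i) holds because $u_1$ is a strict local minimum in the $\|\cdot\|_S$-topology with $J(u_1)<0$; condition (ii) holds because, fixing any $v\in X\setminus\{0\}$, the super-$p$-linear term $\frac{1}{s}\int|tv|^s$ grows like $t^s$ with $s>p$, so $J(tv)\to -\infty$ as $t\to+\infty$ by the growth bound of Remark \ref{remark1} on $A$ (which is at most $p$-homogeneous in $\nabla u$), furnishing the far point $e$. This yields a mountain-pass level $c\ge\alpha>J(u_1)$ and, assuming $(wCPS)_c$, a second critical point $u^*\ne u_1$.

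The \textbf{main obstacle} throughout is verifying the $(wCPS)_\beta$ condition in the intersection space, and this breaks into two genuinely hard pieces. The first is showing a $(CPS)_\beta$ sequence is bounded in the Sobolev norm $\|\cdot\|_S$: here I would combine $(H_3)$ and $(H_4)$ in the standard super-$p$-linear manner, testing $dJ(u_n)$ against $u_n$ and estimating $J(u_n)-\frac{1}{\vartheta}dJ(u_n)[u_n]$ for a suitable $\vartheta$ related to $\alpha_2,\eta_1$, so that the coercivity $(H_2)$ forces $\|\nabla u_n\|_p$ to stay bounded. The second, and truly delicate, piece is passing from $\|\cdot\|_S$-convergence to producing a limit point $u\in X$ that is \emph{bounded} in $L^\infty$ and satisfies $dJ(u)=0$: this is exactly where the Candela--Palmieri machinery and the Brezis--Browder / Stampacchia-type arguments enter, using the strict monotonicity $(H_6)$ to get a.e. convergence of gradients and a careful a priori $L^\infty$-estimate (Moser iteration or Stampacchia truncation, exploiting $s<p^*$) to guarantee that the weak limit actually lies in the intersection space $X$ rather than merely in $S$. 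Getting the boundedness of the solution — and hence that the recovered critical point is legitimately in $X$ with the right energy level $J(u)=\beta$ — is where I expect most of the technical work to concentrate.
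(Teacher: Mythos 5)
Your sign-truncation (the functionals $\J_{\pm}$) and the testing argument showing that the resulting minimizer has a sign are exactly what the paper does in the last step of its proof. However, the core of the paper's argument is missing from your proposal, and the substitute you offer for it does not work. You correctly observe that the sign-truncated functional $J_-$ is still unbounded from below (the $s$-term still acts on negative $u$) and you say that ``the standard fix is to truncate the convex term too,'' but you never construct this truncation --- and it is precisely there that the threshold $\lambda_1$, whose existence is the content of the statement, arises. The paper multiplies the $s$-term by $\varphi(u)=\tau(|\nabla u|_p)$, where $\tau$ is a cut-off adapted to the one-dimensional function $h(x)=\alpha_1 x^{p}-\frac{\lambda}{q}c_q^{q}x^q-\frac{1}{s}c_s^{s}x^s$; the construction (zeros $R_0<R_1$, $\bar h\ge 0$ beyond $R_0$) works only when $h$ attains a positive maximum, which happens exactly when $\lambda$ is small, i.e.\ $\lambda<\lambda_1$. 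Your proposal fixes $\lambda>0$ arbitrarily and never explains where smallness of $\lambda$ enters, so as written it cannot yield the stated conclusion.

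The second gap is in the compactness. You propose to bound $(CPS)_\beta$ sequences by the Ambrosetti--Rabinowitz-type estimate $J(u_n)-\frac{1}{\vartheta}\langle dJ(u_n),u_n\rangle$, ``combining $(H_3)$ and $(H_4)$.'' This fails: $(H_4)$ gives a \emph{lower} bound $a(x,t,\xi)\cdot\xi+A_t(x,t,\xi)t\ge\alpha_2\,a(x,t,\xi)\cdot\xi$, whereas that estimate requires an \emph{upper} bound on $a\cdot\xi+A_t t$ of the form $sA-a\cdot\xi-A_t t\ge\alpha_3\,a\cdot\xi$, which is exactly the additional hypothesis $(H_7)$ assumed only in Theorem \ref{theorem2} and used in Proposition \ref{proposition2}; it is not available under $(H_0)$--$(H_6)$. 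The paper sidesteps this entirely: thanks to the norm truncation, any $(CPS)_c$ sequence of the truncated functional $F$ with $c<0$ eventually satisfies $F(u_n)<0$, hence $|\nabla u_n|_p<R_0$ automatically (Lemma \ref{lemma1}, items (ii)--(iii)), so boundedness is free and compactness is only ever needed at negative levels. For the same reason, the entire mountain-pass portion of your proposal is both out of place (Theorem \ref{theorem1} only requires the two signed local minima; the mountain-pass solutions belong to Theorem \ref{theorem2}, under $(H_7)$) and unjustified: Remark \ref{remark1} bounds $A(x,t,\xi)$ by $\Phi_0(t)+\phi_0(t)|\xi|^p$ with $\Phi_0,\phi_0$ merely continuous in $t$ and subject to no growth restriction, so it does not imply $J(tv)\to-\infty$ as $t\to+\infty$; the paper needs Lemma \ref{lemma2}, again resting on $(H_7)$, to obtain that decay.
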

\begin{theorem}\label{theorem2}
Under the same assumptions of Theorem \ref{theorem1}, if moreover
\begin{itemize}
\item[$(H_7)$] there exists a constant $\alpha_3>0$ such that
\[
sA(x,t,\xi)-a(x,t,\xi)\cdot \xi-A_t(x,t,\xi)t\geq \alpha_3a(x,t,\xi)\cdot\xi \quad \mbox{ a.e. in $\Omega$, for all $(t,\xi)\in \R\times\R^N$,}
\]
\end{itemize}
then, for any $\lambda\in \left]0,\lambda_1\right]$ problem \eqref{problema2} has at least four weak bounded solutions, two positive and two negative.
\end{theorem}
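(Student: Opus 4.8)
The plan is to bootstrap from Theorem \ref{theorem1}: its two solutions are a positive local minimum $u_1$ and a negative local minimum $w_1$, produced in Section \ref{section3} via the Garcia Azorero--Peral truncation, and the extra hypothesis $(H_7)$ is exactly what is needed to append to each a mountain pass critical point, thereby doubling the count. Let me write $\mathcal{J}$ for the action functional attached to \eqref{problema2} and $\mathcal{J}_+$ (resp. $\mathcal{J}_-$) for the functional obtained by replacing $\lambda|u|^{q-2}u+|u|^{s-2}u$ with its positive (resp. negative) truncation, so that every critical point of $\mathcal{J}_+$ is nonnegative and solves \eqref{problema2}. I will run the argument for $\mathcal{J}_+$; the case of $\mathcal{J}_-$ is identical by symmetry, and the two sign classes are automatically disjoint.

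First I would set up the geometry of Proposition \ref{Mountain Pass Theorem} anchored at $u_0 := u_1$, which is a local minimum of $\mathcal{J}_+$ with $\mathcal{J}_+(u_1)<0$. The nondegeneracy of this minimum, already available from Section \ref{section3}, supplies $\rho>0$ and $\alpha>\mathcal{J}_+(u_1)$ with $\mathcal{J}_+(u)\geq\alpha$ whenever $\|u-u_1\|_S=\rho$, giving hypothesis (i). For the far endpoint I would exploit $s>p$: fixing $e_0\in X$ with $e_0>0$ and using $(H_1)$--$(H_3)$ together with Remark \ref{remark1} to bound the principal part from above by $\int_\Omega\big(\Phi_0(te_0)+\phi_0(te_0)|\nabla(te_0)|^p\big)\,dx$, the leading $-\frac{1}{s}\int_\Omega(u^+)^s\,dx$ term forces $\mathcal{J}_+(te_0)\to-\infty$ as $t\to+\infty$. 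Hence some $e=te_0$ satisfies $\|e-u_1\|_S>\rho$ and $\mathcal{J}_+(e)<\mathcal{J}_+(u_1)$, yielding hypothesis (ii) and a minimax level $c\geq\alpha>\mathcal{J}_+(u_1)$.

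The heart of the matter, and the step where $(H_7)$ enters, is the verification of $(wCPS)_c$. Given a $(CPS)_c$--sequence $(u_n)_n$, I would combine $s\,\mathcal{J}_+(u_n)$ with $\langle d\mathcal{J}_+(u_n),u_n\rangle$. Because the convex term has exponent $s$, the difference $s\,\mathcal{J}_+(u_n)-\langle d\mathcal{J}_+(u_n),u_n\rangle$ cancels the $\int_\Omega(u_n^+)^s\,dx$ contribution exactly, leaving only a remainder proportional to $\int_\Omega(u_n^+)^q\,dx$ (of lower order since $q<p$) and the principal part $\int_\Omega\big(sA(x,u_n,\nabla u_n)-a(x,u_n,\nabla u_n)\cdot\nabla u_n-A_t(x,u_n,\nabla u_n)u_n\big)\,dx$. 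By $(H_7)$ and Remark \ref{remark2} this principal part is bounded below by $\frac{\alpha_1\alpha_3}{\eta_1}\int_\Omega|\nabla u_n|^p\,dx$. On the other hand the Cerami condition gives $\langle d\mathcal{J}_+(u_n),u_n\rangle\to 0$ and $\mathcal{J}_+(u_n)\to c$, so the left-hand side is $sc+o(1)$; absorbing the sub-$p$ term then produces a uniform bound on $\int_\Omega|\nabla u_n|^p\,dx$, i.e. boundedness of $(u_n)_n$ in $\|\cdot\|_S$. From here the weak compactness scheme of \cite{CP2} applies to the truncated functional: up to a subsequence $\|u_n-u\|_S\to 0$ with $u\in X$, $d\mathcal{J}_+(u)=0$ and $\mathcal{J}_+(u)=c$.

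With $(wCPS)_c$ secured, Proposition \ref{Mountain Pass Theorem} delivers a critical point $u_2$ of $\mathcal{J}_+$ with $\mathcal{J}_+(u_2)=c>\mathcal{J}_+(u_1)$, so $u_2\neq u_1$; being a nonnegative weak solution of \eqref{problema2}, it is positive by the standard boundedness and strong maximum principle arguments closing Section \ref{section4}. The mirror argument on $\mathcal{J}_-$ yields a negative mountain pass solution $w_2\neq w_1$, and the four solutions $u_1,u_2>0>w_1,w_2$ are pairwise distinct, which proves the claim. The principal obstacle throughout is the $(wCPS)_c$ verification: since $\mathcal{J}_+$ is only $C^1$ on the intersection space $X$ and Cerami sequences need not be bounded in $\|\cdot\|_X$, classical compactness is unavailable, and the whole estimate hinges on first extracting the $\|\cdot\|_S$--bound from $(H_7)$ before invoking the $S$--convergence machinery.
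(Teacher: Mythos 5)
Your overall architecture --- a mountain pass point appended to each signed truncation $\J_{\pm}$, with $(H_7)$ used for compactness and the four solutions separated by sign and by energy level --- matches the paper, and your $(wCPS)$ verification via $s\J_{+}(u_n)-\left\langle d\J_{+}(u_n),u_n\right\rangle$, $(H_7)$ and Remark \ref{remark2} is exactly the paper's Proposition \ref{proposition2}. However, both of your verifications of the mountain pass geometry contain genuine gaps. The serious one is the far endpoint (ii): you bound $\int_{\Omega}A(x,te_0,t\nabla e_0)\,dx$ by $\int_{\Omega}\bigl(\Phi_0(te_0)+\phi_0(te_0)|t\nabla e_0|^p\bigr)dx$ and claim the term $-\frac{t^s}{s}\int_{\Omega}e_0^s\,dx$ dominates. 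But $\Phi_0$ and $\phi_0$ are merely positive \emph{continuous} functions of $t$; nothing in $(H_0)$--$(H_6)$ prevents them from growing faster than any polynomial (e.g.\ exponentially), so $\Phi_0(te_0)$ and $\phi_0(te_0)\,t^p$ need not be $o(t^s)$, and the conclusion $\J_{+}(te_0)\to-\infty$ simply does not follow from these assumptions. This is precisely why the paper proves Lemma \ref{lemma2}: integrating the differential inequality obtained from $(H_3)$ and $(H_7)$ gives $A(x,\sigma t,\sigma\xi)\leq\sigma^{s-\frac{\alpha_3}{\eta_1}}A(x,t,\xi)$ for $\sigma\geq 1$, and it is only because the exponent $s-\frac{\alpha_3}{\eta_1}$ is \emph{strictly less} than $s$ that $\J(\sigma v_1)\to-\infty$. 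In other words, $(H_7)$ is needed for the geometry and not only for compactness --- a point your proposal misses.

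The second gap is hypothesis (i) of Proposition \ref{Mountain Pass Theorem}. You anchor the paths at the local minimum $u_1$ and invoke a ``nondegeneracy of this minimum, already available from Section \ref{section3}'' to produce $\rho>0$ and $\alpha>\J_{+}(u_1)$ with $\J_{+}\geq\alpha$ on the sphere $\{\|u-u_1\|_S=\rho\}$. Section \ref{section3} establishes no such thing: $u_1$ is only known to be a (possibly non-strict, completely degenerate) local minimizer of a $C^1$ functional, and for such a point no sphere with a strict uniform gap need exist, so Proposition \ref{Mountain Pass Theorem} cannot be applied as you state. The paper avoids this by centering the geometry at $0$: by \eqref{condizione3}, on the sphere $\|u\|_p=R$ with $R_0<R<R_1$ one has $\J(u)\geq h(R)=\alpha>0=\J(0)$, the endpoint $e=\bar\sigma v_1$ furnished by Lemma \ref{lemma2} satisfies $\J(e)<0$, and the resulting mountain pass point at level $c\geq\alpha>0$ is automatically distinct from both $0$ and the negative-energy minimum. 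Both gaps are repairable, but only along the paper's route (Lemma \ref{lemma2} and the annulus geometry of the truncation function $h$), not by the arguments you propose.
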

In order to give the variational formulation of the problem we denote by:
\begin{itemize}
\item $|\cdot|$ the Euclidean norm in $\R^N$ and  $x \cdot y$ the inner product of $x,y\in \R^N$;
\item $L^\ell(\Omega)$ the Lebesgue space with
norm $|u|_\ell = \displaystyle{ \left(\int_{\Omega}|u|^\ell dx\right)^{1/\ell}}$ if $1 \le \ell < +\infty$ and $|u|_{\infty} = \displaystyle\esssup_{\Omega} |u|$ if $\ell=\infty$;
\item $W^{1,p}(\Omega)$ the classical Sobolev space with norm $\|u\|_{p} = |\nabla u|_p$.
\end{itemize}
We set
\[
X=W_{0}^{1,p}(\Omega)\cap L^{\infty}(\Omega)
\]
equipped with the norm $\|u\|_X=\|u\|_p+|u|_{\infty}$.

From the Sobolev Embedding Theorems (see, e.g., \cite{Br}), for any $\ell \in [1,p^*]$ a constant $\sigma_{\ell}>0$ exists such that
\begin{equation}\label{condizione1}
|u|_{\ell}\leq \sigma_{\ell}\|u\|_p \quad \mbox{ for all $u\in W_{0}^{1,p}(\Omega)$}
\end{equation}
and the embedding $W_{0}^{1,p}(\Omega)\hookrightarrow \hookrightarrow L^{\ell}(\Omega)$ is compact if $p\leq\ell<p^{*}$. On the other hand, if $\ell>N$, then $W_{0}^{1,p}(\Omega)$ is continuous embedded in $L^{\infty}(\Omega)$.

By definition, $X \hookrightarrow W_{0}^{1,p}(\Omega)$ and  $X \hookrightarrow L^{\infty}(\Omega)$ with continuous embeddings.
From now on , we assume $1<p\leq N$ (otherwise,  it is $X=W_{0}^{1,p}(\Omega)$ and the following arguments can be simplified).

Now, we consider the functional
\[
\J(u)=\int_{\Omega} A(x,u,\nabla u) dx -\frac{\lambda}{q}\int_{\Omega}|u|^q \ dx -\frac{1}{s}\int_{\Omega} |u|^{s} dx, \quad u\in X.
\] 
Taking $u,v \in X$, by direct computations it follows that its G\^ateaux differential in $u$ along the direction $v$ is
\begin{equation}\label{differenziale1}
\left\langle d\J(u),v\right\rangle=\int_{\Omega} a(x,u,\nabla u)\cdot\nabla v dx+\int_{\Omega} A_t(x,u,\nabla u)v dx-\lambda \int_{\Omega}|u|^{q-2}uv \ dx-\int_{\Omega}|u|^{s-2}uv \ dx.
\end{equation} 
The following proposition states the regularity of $\J$ in $X$.
\begin{proposition}\label{proposition1}
Let us assume that conditions $(H_0)$, $(H_1)$ hold. Moreover, suppose that two positive continuous functions $\Phi_0$ and $\phi_0$ exists such that
\[
A(x,t,\xi)\leq \Phi_0(t)+\phi_0(t)|\xi|^p \quad \mbox{ a.e. in $\Omega$, for all $(t,\xi)\in \R\times\R^N$.}
\]
Then, if $(u_n)_n \subset X$ and $u\in X$ are such that
\[
\|u_n - u\|_p \to 0, \quad u_n \to u \ \quad\hbox{a.e. in $\Omega$, if $n \to+\infty$,}
\]
and a constant $M>0$ exists so that
\[
|u_n|_\infty \le M\quad \hbox{for all $n \in \N$,}
\]
it follows that $\J(u_n) \to \J(u)$ and $\|d\J(u_n)-d\J(u)\|_{X^{'}}\to 0 \quad \mbox{ if $n \to +\infty$.}$ Hence, $\J$ is a $\mathcal{C}^{1}–$ functional on $X$ with Fr\'echet differential as in \eqref{differenziale1}. 
\end{proposition}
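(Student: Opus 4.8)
The plan is to split $\J$ into the principal term $u\mapsto\int_\Omega A(x,u,\nabla u)\,dx$ and the two subordinate power terms, and to show that each term together with its differential \eqref{differenziale1} is sequentially continuous along any sequence obeying the three stated hypotheses. The role of those hypotheses — convergence in $\|\cdot\|_p$, convergence a.e., and a uniform bound $|u_n|_\infty\le M$ — is precisely to enable a dominated convergence argument in which the dominating functions are merely $L^1$-convergent rather than fixed; the bound $M$ is what keeps the continuous coefficients $\Phi_i,\phi_i$ under control through $\max_{[-M,M]}\Phi_i$ and $\max_{[-M,M]}\phi_i$.

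For the subordinate terms I would first upgrade the convergence. Since $|u_n|_\infty\le M$, also $|u|_\infty\le M$, and $\|u_n-u\|_p\to0$ gives $u_n\to u$ in $L^p(\Omega)$ by \eqref{condizione1}; the interpolation estimate $|u_n-u|_\ell^\ell\le(2M)^{\ell-p}\,|u_n-u|_p^p$ (for $\ell\ge p$, the case $\ell<p$ being immediate on a bounded domain) then yields $u_n\to u$ in every $L^\ell(\Omega)$, $1\le\ell<\infty$. Continuity of the Nemytskii operators gives $|u_n|^q\to|u|^q$, $|u_n|^s\to|u|^s$, $|u_n|^{q-2}u_n\to|u|^{q-2}u$ and $|u_n|^{s-2}u_n\to|u|^{s-2}u$ in $L^1(\Omega)$. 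Convergence of the two integral terms of $\J$ follows at once, while for the differential, using $|v|_\infty\le\|v\|_X\le1$,
\[
\Big|\int_\Omega\big(|u_n|^{q-2}u_n-|u|^{q-2}u\big)v\,dx\Big|\le|v|_\infty\,\big|\,|u_n|^{q-2}u_n-|u|^{q-2}u\,\big|_1\to0,
\]
and likewise for the $s$-term, so the subordinate part of $d\J(u_n)-d\J(u)$ tends to $0$ in $X'$ uniformly over $\|v\|_X\le1$.

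The core of the argument — and the main obstacle — is the principal term, where no fixed $L^1$ dominant for $|\nabla u_n|^p$ is available. Passing to a subsequence along which $\nabla u_n\to\nabla u$ a.e.\ (extracted from $\nabla u_n\to\nabla u$ in $L^p$), the Carath\'eodory continuity in $(H_0)$ gives a.e.\ convergence of $A(x,u_n,\nabla u_n)$, $A_t(x,u_n,\nabla u_n)$ and $a(x,u_n,\nabla u_n)$ to their limits. Hypothesis $(H_1)$, the assumed growth bound on $A$ (integrated to a two-sided estimate, the $n$-independent part cancelling in differences), and $|u_n|_\infty\le M$ provide the dominations
\[
|A(x,u_n,\nabla u_n)|,\ |A_t(x,u_n,\nabla u_n)|\le C_M\big(1+|\nabla u_n|^p\big),\qquad |a(x,u_n,\nabla u_n)|^{p'}\le C_M\big(1+|\nabla u_n|^p\big),
\]
the last one because $(p-1)p'=p$. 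The dominating sequence $C_M(1+|\nabla u_n|^p)$ converges in $L^1(\Omega)$ to $C_M(1+|\nabla u|^p)$ since $\nabla u_n\to\nabla u$ in $L^p$, so the generalized (Vitali-type) dominated convergence theorem — a.e.\ convergence together with $L^1$-convergent dominants — applies and gives $A(x,u_n,\nabla u_n)\to A(x,u,\nabla u)$ and $A_t(x,u_n,\nabla u_n)\to A_t(x,u,\nabla u)$ in $L^1(\Omega)$ and $a(x,u_n,\nabla u_n)\to a(x,u,\nabla u)$ in $(L^{p'}(\Omega))^N$. The first yields $\int_\Omega A(x,u_n,\nabla u_n)\,dx\to\int_\Omega A(x,u,\nabla u)\,dx$, while the estimate
\[
\Big|\int_\Omega\big(a(x,u_n,\nabla u_n)-a(x,u,\nabla u)\big)\cdot\nabla v\,dx\Big|\le|a(x,u_n,\nabla u_n)-a(x,u,\nabla u)|_{p'}\,\|v\|_p
\]
and its $L^\infty$-counterpart for the $A_t$-term control the principal part of $d\J(u_n)-d\J(u)$ in $X'$.

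Assembling the pieces gives $\J(u_n)\to\J(u)$ and $\|d\J(u_n)-d\J(u)\|_{X'}\to0$ along the chosen subsequence; since the limits do not depend on the subsequence, the usual argument (every subsequence has a further subsequence converging to the same limit) promotes this to convergence of the full sequence. Finally, any sequence converging in $(X,\|\cdot\|_X)$ satisfies the three hypotheses — convergence in $\|\cdot\|_p$ and in $L^\infty$, hence a.e.\ along a subsequence and uniform boundedness — so $u\mapsto d\J(u)$ is continuous from $X$ into $X'$; together with the existence of the G\^ateaux differential \eqref{differenziale1}, the classical criterion upgrades $\J$ to a $C^1(X,\R)$ functional whose Fr\'echet differential is \eqref{differenziale1}.
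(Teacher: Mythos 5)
Your proof is correct, but it follows a genuinely different (self-contained) route from the paper, whose entire proof consists of two steps: observing that the combined nonlinearity $\lambda|t|^{q-2}t+|t|^{s-2}t$ obeys the subcritical growth bound \eqref{condizione2}, and then invoking \cite[Proposition 3.1]{CP2}, where the analytic work is done once and for all for a general Carath\'eodory nonlinearity with that growth. What you have written out --- splitting off the power terms, upgrading $\|u_n-u\|_p\to 0$ plus the uniform $L^\infty$ bound to $L^\ell$-convergence and Nemytskii continuity, and, for the principal part, a.e.\ convergence of gradients along a subsequence combined with the generalized (Vitali-type) dominated convergence theorem with dominants $C_M(1+|\nabla u_n|^p)$ convergent in $L^1$, plus H\"older duality and the subsequence-to-full-sequence step --- is in substance a reconstruction of the argument underlying that citation. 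The paper's route buys brevity and covers any $g$ with growth \eqref{condizione2} at once; yours makes visible exactly where each hypothesis enters (in particular that the uniform bound $|u_n|_\infty\le M$ is precisely what tames the continuous coefficients $\Phi_i,\phi_i$, and that for pure powers the full strength of \eqref{condizione2} is not even needed). Two minor points: your estimate $|a(x,u_n,\nabla u_n)|^{p'}\le C_M(1+|\nabla u_n|^p)$ reads $(H_1)$ with $|\xi|^{p-1}$ and $|\xi|^p$ on the right-hand sides, which is surely what the authors intend (as literally printed, with powers of $|t|$, the domination would be even simpler given the $L^\infty$ bound); and your device of integrating $(H_1)$ and cancelling the $n$-independent term $A(x,0,0)$ in differences is indeed needed, since the hypotheses of the proposition alone do not bound $A$ from below --- in the paper this issue is invisible because $(H_2)$ is always assumed alongside, cf.\ Remark \ref{remark1}.
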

\begin{proof}
We note that, since $1<q<p<s$ it is
\begin{equation}\label{condizione2}
|\lambda|t|^{q-2}t+|t|^{s-2}t|\leq (\lambda+1)\left(1+|t|^{s-1}\right) \quad \mbox{ for all $t\in \R$.}
\end{equation}
Thus, the proof follows arguing as in \cite[Proposition 3.1]{CP2}.
\end{proof}
\begin{remark}\label{remark3}
The previous proposition implies that the research of weak bounded solutions of \eqref{problema2} is reduced to the study of critical points of $\J$ in $X$.
\end{remark}
\section{Existence of a local minimum of $\J$} \label{section3}

Let us point out that $\J$ is not bounded from below in $X$, due to the presence of the term $\int_{\Omega}|u|^{s} dx$, $s>p$. Anyway, we can introduce a ''good'' bounded from below truncation of $\J$.

By $(H_2)$ and \eqref{condizione1} we have that
\[
\J(u)\geq \alpha_1 \int_{\Omega}|\nabla u|^p dx -\frac{\lambda}{q}c_q^{q}\left(\int_{\Omega}|\nabla u|^p dx\right)^{\frac{q}{p}}-\frac{1}{s}c_s^{s}\left(\int_{\Omega}|\nabla u|^p dx\right)^{\frac{s}{p}} \quad \mbox{ for all $u\in X$.}
\]
Therefore, following the arguments in \cite{GAP}, if we define
\[
h(x)=\alpha_1 x^{p}-\frac{\lambda}{q}c_q^{q}x^q-\frac{1}{s}c_s^{s}x^s \quad \mbox{ for all $x\geq 0$,}
\]
then, it is
\begin{equation}\label{condizione3}
\J(u)\geq h(|\nabla u|_p).
\end{equation}

From a careful analysis of the behavior of function $h$, it follows that a constant $\lambda_1>0$ exists such that if $0<\lambda<\lambda_1$, then $h$ attains its positive maximum.

From now on, let $0<\lambda<\lambda_1$ and $R_0, R_1$ the zeros of $h$ such that $h(x)>0$ for $R_0<x<R_1$ and $h(x)\leq 0$ otherwise. Let us consider the following truncation of the functional $\J$.

Denote by $\tau:\R_{+}\to \left[0,1\right]$ a non increasing $\mathcal{C}^{\infty}$ function such that
\[
\tau(x)\equiv 1 \ \mbox{ if $x\leq R_0$,} \quad \tau(x)\equiv 0  \ \mbox{ if $x\geq R_1$.}
\]
Let $\varphi(u)=\tau(|\nabla u|_p)$ and consider the truncation functional
\[
F(u)=\int_{\Omega}A(x,u,\nabla u)\ dx-\frac{\lambda}{q}\int_{\Omega}|u|^q \ dx -\frac{1}{s}\varphi(u) \int_{\Omega}|u|^s \ dx.
\]
As in \eqref{condizione3}, we have
\begin{equation}\label{condizione4}
F(u)\geq \bar{h}(|\nabla u|_p)
\end{equation}
with 
\[\bar{h}(x)=\alpha_1 x^p-\frac{\lambda}{q}c_q^q x^q-\frac{1}{s}c_s^s x^s\tau(x) \quad \mbox{ for all $x\geq0$.}
\] 

Clearly, it is $\bar{h}\geq h$; more precisely
\begin{equation}\label{equazione3.3}
\bar{h}(x)=h(x) \ \mbox{ if $x\leq R_0$, } \ \bar{h}(x)\geq 0 \ \mbox{ if $x\geq R_0$} \ \mbox{ and } \ \bar{h}(x)=\alpha_1 x^p-\frac{\lambda}{q}c_q^q x^q \ \mbox{ if $x\geq R_1$.}
\end{equation}

The functional $F$ verifies the following properties.
\begin{lemma}\label{lemma1}
Assume that $(H_0)$--$(H_5)$ hold. Then
\begin{itemize}
\item[$(i)$] $F \in \mathcal{C}^{1}(X,\R);$
\item[$(ii)$] if $F(u)<0$, then $|\nabla u|_p<R_0$ and $\J(v)=F(v)$ for all $v$ in a small enough neighborhood of $u$;
\item[$(iii)$] $F$ satisfies $(wCPS)_c$ condition for $c<0$.
\end{itemize}
\end{lemma}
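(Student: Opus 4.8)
The plan is to establish the three parts of Lemma~\ref{lemma1} in the order listed, since each builds on the structural assumptions $(H_0)$--$(H_5)$ and on the properties of the cutoff $\varphi$. For part $(i)$, I would argue that $F$ inherits its $\mathcal{C}^1$ regularity from $\J$ via Proposition~\ref{proposition1}. The only new ingredient beyond $\J$ is the factor $\varphi(u)=\tau(|\nabla u|_p)$ multiplying the term $\int_\Omega |u|^s\,dx$. Since $\tau\in \mathcal{C}^\infty$ and $u\mapsto |\nabla u|_p$ is $\mathcal{C}^1$ away from the origin (and $\tau$ is constant near $0$, so no issue arises there), the map $\varphi$ is $\mathcal{C}^1$ on $X$; the product rule then gives that the cutoff term is $\mathcal{C}^1$, and I would record its differential explicitly, noting that $\langle d\varphi(u),v\rangle = \tau'(|\nabla u|_p)\,|\nabla u|_p^{1-p}\int_\Omega |\nabla u|^{p-2}\nabla u\cdot\nabla v\,dx$ whenever $\tau'$ is nonzero, hence on the bounded annulus $R_0\le |\nabla u|_p\le R_1$ where all quantities are controlled. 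Combining with Proposition~\ref{proposition1} for the remaining summands yields $F\in\mathcal{C}^1(X,\R)$.

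For part $(ii)$, the key observation is the lower bound $F(u)\ge \bar h(|\nabla u|_p)$ from \eqref{condizione4} together with the sign information in \eqref{equazione3.3}. Since $\bar h(x)\ge 0$ for $x\ge R_0$, the hypothesis $F(u)<0$ forces $|\nabla u|_p<R_0$. But on the set $\{|\nabla u|_p\le R_0\}$ we have $\tau(|\nabla u|_p)\equiv 1$, so $\varphi(u)=1$ and therefore $F(u)=\J(u)$ pointwise. To get the equality $\J(v)=F(v)$ on a whole neighborhood of $u$, I would use the continuity of $v\mapsto |\nabla v|_p$ on $X$: since $|\nabla u|_p<R_0$ strictly, all $v$ sufficiently close to $u$ in $\|\cdot\|_X$ (indeed in $\|\cdot\|_p$) still satisfy $|\nabla v|_p<R_0$, whence $\varphi(v)=1$ and $\J(v)=F(v)$.

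For part $(iii)$, I would take a $(CPS)_c$--sequence $(u_n)_n$ for $F$ with $c<0$ and verify the two requirements of Definition~\ref{wCPSdef}. The first goal is to bound the sequence in $X$. From $F(u_n)\to c<0$ and \eqref{equazione3.3} I would extract, for $n$ large, that $|\nabla u_n|_p<R_0$, so the gradient norms are bounded and, via the annulus structure, on the relevant range $F$ coincides with $\J$ and the cutoff plays no role; this reduces the $(wCPS)$ analysis for $F$ at negative levels to the same analysis already carried out for $\J$ in \cite{CP2, CP3}. The $L^\infty$--bound, which is the genuinely delicate point for the intersection space $X$, would be obtained from the growth and coercivity conditions $(H_2)$--$(H_4)$ by a Stampacchia/De Giorgi-type truncation argument on the test-function estimates coming from $\langle dF(u_n),v\rangle\to 0$, exactly as in the Candela--Palmieri framework. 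Once $(u_n)_n$ is bounded in $X$, I would pass to a subsequence converging weakly in $W_0^{1,p}(\Omega)$ and strongly in $L^\ell(\Omega)$, upgrade to a.e.\ convergence, and use the strict monotonicity $(H_6)$ to deduce strong convergence of the gradients in $L^p$, yielding the limit point $u\in X$ with $\|u_n-u\|_S\to 0$ and $dF(u)=0$, $F(u)=c$.

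The main obstacle will be the a priori $L^\infty$--estimate needed to place the limit $u$ in $X$ rather than merely in $S=W_0^{1,p}(\Omega)$: this is precisely where the loss of regularity of the functional bites, and it is the reason the weak rather than classical $(CPS)$ condition is used. I expect to control it by testing the approximate equation with a suitable truncation of $u_n$ and exploiting $(H_4)$ to absorb the lower-order term $A_t(x,u_n,\nabla u_n)u_n$ into the principal part $a(x,u_n,\nabla u_n)\cdot\nabla u_n$, together with the subcritical growth $s<p^*$ of the convex nonlinearity, following the boundedness scheme of \cite{CP1, CP3}.
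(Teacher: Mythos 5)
Your proposal is correct and takes essentially the same route as the paper's proof: part $(i)$ via Proposition \ref{proposition1} together with the $\mathcal{C}^1$-regularity of $\varphi$, part $(ii)$ via \eqref{condizione4} and the sign properties \eqref{equazione3.3} of $\bar{h}$, and part $(iii)$ by using $(ii)$ to convert a negative-level $(CPS)_c$--sequence for $F$ into a $(CPS)_c$--sequence for $\J$ that is bounded in $W_0^{1,p}(\Omega)$, after which the paper simply cites the second part of \cite[Proposition 4.6]{CP2} for the $L^\infty$--estimate and strong $W_0^{1,p}$--convergence that you sketch explicitly (Stampacchia-type truncation, then monotonicity of $a$ in $\xi$). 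The one discrepancy worth noting is that your gradient-convergence step invokes $(H_6)$, which is not among the hypotheses $(H_0)$--$(H_5)$ of the lemma as stated; this mirrors an imprecision in the paper itself, since the cited argument of \cite{CP2} does rely on such a strict monotonicity condition, which is in any case assumed in Theorems \ref{theorem1} and \ref{theorem2} where the lemma is applied.
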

\begin{proof}
\begin{itemize}
\item[$(i)$] The proof follows from Remark \ref{remark1} and Proposition \ref{proposition1} applied to the functional $F$, since $\varphi\in \mathcal{C}^{1}(X,\R)$.
\item[$(ii)$] This property easily follows from \eqref{condizione4} and the definition of $\bar{h}$.
\item[$(iii)$] Let $(u_n)_n$ be a $(CPS)_c$ -- sequence for $F$ with $c<0$. Clearly, for $n$ large it is $F(u_n)<0$, thus from $(ii)$
\[
|\nabla u_n|_p < R_0, \J(u_n)=F(u_n) \quad \mbox{ and } \quad d\J(u_n)=dF(u_n).
\]
Hence, $(u_n)_n$ is a $(CPS)_c$ -- sequence for $\J$ which is bounded in $W_0^{1,p}(\Omega)$. As $(H_0)$--$(H_5)$ and \eqref{condizione2} hold, arguing as in the second part of \cite[Proposition 4.6]{CP2} it can be proved that a point $u\in X$ exists such that $\|u_n-u\|_p\to 0$ (up to subsequences) with $\J(u)=c$ and $d\J(u)=0$, i.e., $F(u)=c$ and $dF(u)=0$; thus $F$ satisfies the $(wCPS)_c$ condition.\qedhere
\end{itemize}
\end{proof}
\begin{proof}[Proof of the Theorem $\ref{theorem1}$] 
First of all, we note that, differently from $\J$, the functional $F$ is bounded from below (see \eqref{condizione4} and \eqref{equazione3.3}). Moreover, from $(H_5)$, taking $v\in X, v\not\equiv 0$, for all $\sigma \in \R, \sigma>0$ such that $\sigma |v|_{\infty}<\delta$, i.e., $\sigma < \frac{\delta}{|v|_{\infty}}$, it is
\[
\begin{split}
F(\sigma v)&=\int_{\Omega}A(x,\sigma v, \sigma \nabla v)\ dx - \lambda\frac{c_q^q}{q}|\sigma v|_q^q-\frac{c_s^s}{s}|\sigma v|_s^s\varphi(\sigma v)\\[10pt]
&\leq \eta_2 \sigma^p |\nabla v|^p -\lambda \frac{c_q^q}{q} \sigma^q |v|_q^q  < 0 \quad \mbox{ for $\sigma$ small enough}.
\end{split}
\]
Hence, $\displaystyle\inf_{X} F < 0$. Since $F$ satisfies the $(wCPS)_c$ condition for $c<0$, Proposition \ref{Minimum Principle} applies, then $F$ attains its minimum, i.e., $u_0\in X$ exists such that $F(u_0)=\displaystyle\min_{u\in X} F(u)<0$ and $dF(u_0)=0$.

From Lemma \ref{lemma1} $u_0$ is a local minimum also for $\J$ on $X$ with $\J(u_0)<0$. Obviously, $u_0\not\equiv 0$ as from $(H_2)$ and $(H_5)$ it is $\J(0)=F(0)=\int_{\Omega}A(x,0,0) dx=0$.

Finally, arguing as in \cite{MM}(see also \cite{MS2}), we prove that $\J$ has at least two solutions, one positive and one negative. For this, let us denote by $u_{+}=\max\{0,u\}$ and $u_{-}=\max\{-u,0\}$, the positive and the negative part of $u$, respectively, so that $u=u_{+}-u_{-}$. If we replace $\lambda |u|^{q-2}u+|u|^{s-2}u$ with $\lambda u_{+}^{q-1}+u_{+}^{s-1}$, all the previous arguments still hold true for the functional
\[
\J_{+}(u)=\int_{\Omega}A(x,u,\nabla u) dx -\frac{\lambda}{q}\int_{\Omega}u_{+}^{q}\ dx -\frac{1}{s}\int_{\Omega}u_{+}^{s}\ dx.
\]
In particular, $\J_{+}$ has a critical point $u\not\equiv 0$ (which is a local minimum for $\J_{+}$). Hence, from $(H_2)$--$(H_4)$
\[
\begin{split}
0&=\left\langle d\J_{+}(u), -u_{-}\right\rangle=\int_{\Omega}a(x,-u_{-},\nabla(-u_{-}))\cdot\nabla(-u_{-}) \ dx + \int_{\Omega}A_t(x,-u_{-},\nabla(-u_{-}))(-u_{-})\\[10pt]
&-\lambda\int_{\Omega} u_{+}^{q-1}u_{-} \ dx-\int_{\Omega} u_{+}^{s-1}u_{-} \ dx\\[10pt]
&\geq \frac{\alpha_1 \alpha_2}{\eta_1}|\nabla(-u_{-})|_p^p=\frac{\alpha_1 \alpha_2}{\eta_1}\|u_{-}\|_p^p.
\end{split}
\]
Hence, $u_{-}\equiv 0$ a.e. in $\Omega$ and so, $u$ is a positive critical point of $\J$. Similarly, replacing $\lambda |u|^{q-2}u+|u|^{s-2}u$ with $\lambda u_{-}^{q-1}+u_{-}^{s-1}$, we find a negative critical point of $\J$.
\end{proof}


\section{Existence of a mountain pass type critical point of $\J$} \label{section4}
The following compactness result holds.
\begin{proposition}\label{proposition2}
Assume that $(H_0)$--$(H_5)$ and $(H_7)$ hold. Then functional $\J$ satisfies the $(wCPS)$ condition in $\R$.
\end{proposition}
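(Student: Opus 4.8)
The plan is to verify the three requirements of Definition \ref{wCPSdef} for an arbitrary level $\beta \in \R$: given a $(CPS)_\beta$--sequence $(u_n)_n \subset X$, I must produce $u \in X$ with $\|u_n - u\|_p \to 0$ (up to subsequences), $\J(u) = \beta$ and $d\J(u) = 0$. The only genuinely new ingredient with respect to Lemma \ref{lemma1} $(iii)$ is that now $\beta$ may be nonnegative, so the truncation no longer forces $|\nabla u_n|_p$ to stay bounded; boundedness in $W_0^{1,p}(\Omega)$ must instead be extracted from $(H_7)$, which plays here the role of an Ambrosetti--Rabinowitz type condition. Once that is done, I would reproduce the convergence analysis of the second part of \cite[Proposition 4.6]{CP2}.

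First I would prove that $(u_n)_n$ is bounded in $W_0^{1,p}(\Omega)$. Since $\J(u_n) \to \beta$ and $\|d\J(u_n)\|_{X'}(1 + \|u_n\|_X) \to 0$, in particular $\langle d\J(u_n), u_n\rangle = o(1)$. Computing the combination $s\J(u_n) - \langle d\J(u_n), u_n\rangle$ from \eqref{differenziale1}, the two $|u_n|^s$ terms cancel and one is left with
\[
s\J(u_n) - \langle d\J(u_n), u_n\rangle = \int_\Omega \big[ sA(x,u_n,\nabla u_n) - a(x,u_n,\nabla u_n)\cdot\nabla u_n - A_t(x,u_n,\nabla u_n)u_n \big]\,dx - \lambda\Big(\tfrac{s}{q} - 1\Big)\int_\Omega |u_n|^q\,dx.
\]
By $(H_7)$ and Remark \ref{remark2} the first integrand is bounded below by $\alpha_3\frac{\alpha_1}{\eta_1}|\nabla u_n|^p$, while the left-hand side equals $s\beta + o(1)$; hence, using \eqref{condizione1} with $\ell = q$,
\[
\alpha_3\frac{\alpha_1}{\eta_1}\|u_n\|_p^p \le s\beta + o(1) + \lambda\Big(\tfrac{s}{q} - 1\Big)\sigma_q^q\,\|u_n\|_p^q.
\]
Since $q < p$, the superlinear term on the left dominates and $(\|u_n\|_p)_n$ is bounded.

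Once boundedness is secured, by reflexivity of $W_0^{1,p}(\Omega)$ and the compact embeddings I would pass (up to a subsequence) to $u_n \wk u$ in $W_0^{1,p}(\Omega)$, $u_n \to u$ in $L^\ell(\Omega)$ for every $\ell \in [1,p^*)$, and $u_n \to u$ a.e. in $\Omega$, with $u \in W_0^{1,p}(\Omega)$. The heart of the matter is to upgrade this to strong convergence $\|u_n - u\|_p \to 0$ and to identify $u$ as a bounded critical point at level $\beta$. This is exactly the content of the second part of \cite[Proposition 4.6]{CP2}, which I would invoke: exploiting the strict monotonicity $(H_6)$ of $a(x,t,\cdot)$ together with the growth control $(H_1)$, one tests $d\J(u_n) \to 0$ against the bounded truncations $T_k(u_n - u) \in X$ and, letting first $n \to \infty$ and then $k \to \infty$, obtains $\nabla u_n \to \nabla u$ a.e. in $\Omega$ and hence, by standard arguments, $\|u_n - u\|_p \to 0$.

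Finally, from the strong convergence, the a.e. convergence and a uniform bound $|u_n|_\infty \le M$ on the truncated range, Proposition \ref{proposition1} yields $\J(u_n) \to \J(u) = \beta$ and $\|d\J(u_n) - d\J(u)\|_{X'} \to 0$, whence $d\J(u) = 0$; a boundedness estimate for the solutions of the limit equation (available since $s < p^*$, as in \cite{CP2}) then gives $u \in L^\infty(\Omega)$, i.e. $u \in X$. I expect the main obstacle to be precisely this last step: because $\J$ is of class $C^1$ only on the intersection space $X$ and not on $W_0^{1,p}(\Omega)$, the natural test function $u_n - u$ is \emph{not} admissible, so the passage to the limit must proceed through bounded truncations, and the lower-order term $A_t(x,u_n,\nabla u_n)$ --- whose growth in $(H_1)$ involves the possibly unbounded $u_n$ --- has to be controlled carefully on the truncated region. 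This is the technical core already handled in \cite{CP2}, so here it suffices to supply the boundedness argument above and appeal to it.
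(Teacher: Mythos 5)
Your proposal is correct and follows essentially the same route as the paper: the identical estimate on $s\J(u_n)-\langle d\J(u_n),u_n\rangle$, bounded below via $(H_7)$ and Remark \ref{remark2} by $\frac{\alpha_1\alpha_3}{\eta_1}\|u_n\|_p^p$ minus the subcritical $q$-term, yielding boundedness in $W_0^{1,p}(\Omega)$ since $q<p$, followed by the same appeal to \cite[Proposition 4.6]{CP2} for the strong $W_0^{1,p}$-convergence and the identification of the limit as a bounded critical point at level $\beta$. The only (harmless) difference is that you run the $(H_7)$ estimate uniformly for every level $\beta\in\R$, whereas the paper treats $c<0$ separately via the truncation argument of Lemma \ref{lemma1} $(iii)$; your unified treatment works just as well, since a negative $s\beta$ on the right-hand side only strengthens the inequality.
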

\begin{proof}
Let $(u_n)_n$ be a $(CPS)_c$ sequence for $\J$ at level $c\in\R$, i.e.,
\begin{equation}\label{condizione5}
\J(u_n)\to c \quad \mbox{ and } \quad \|d\J(u_n)\|_{X^{'}}\left(1+\|u_n\|_{X}\right)\to 0 \quad \mbox{ if $n\to +\infty$}.
\end{equation}
If $c<0$, the boundedness of $(|\nabla u_n|_p)_n$ follows as in $(iii)$ of Lemma \ref{lemma1}.

Let $c\geq 0$. Thus, condition \eqref{condizione5} implies
\begin{equation}\label{condizione6}
c+\varepsilon_n=\int_{\Omega}A(x,u_n,\nabla u_n) dx-\frac{\lambda}{q}\int_{\Omega}|u_n|^q dx-\frac{1}{s}\int_{\Omega}|u_n|^s dx
\end{equation}
and
\begin{equation}\label{condizione7}
\begin{split}
\varepsilon&=\left\langle d\J(u_n), u_n\right\rangle=\int_{\Omega}a(x,u_n,\nabla u_n) \cdot \nabla u_n dx+\int_{\Omega}A_t(x,u_n,\nabla u_n)u_n dx\\[10pt]
&-\lambda \int_{\Omega}|u_n|^q dx-\int_{\Omega}|u_n|^s dx
\end{split}
\end{equation}
where $(\varepsilon_n)_n$ denotes any infinitesimal sequence in $\R$.

From \eqref{condizione6}, \eqref{condizione7}, $(H_7)$ and $(H_2)$--$(H_4)$ we obtain
\[
\begin{split}
sc+\varepsilon_n &=s\J(u_n)-\left\langle d\J(u_n), u_n\right\rangle\\[10pt]
&=\int_{\Omega}sA(x,u_n,\nabla u_n) dx-\int_{\Omega}a(x,u_n,\nabla u_n) \cdot \nabla u_n dx-\int_{\Omega}A_t(x,u_n,\nabla u_n)u_n dx\\[10pt]
&-\lambda\left(\frac{s}{q}-1\right)\int_{\Omega}|u_n|^q dx\\[10pt]
&\geq\frac{\alpha_1\alpha_3}{\eta_1}\int_{\Omega}|\nabla u_n|^p dx -\lambda\left(\frac{s}{q}-1\right)\int_{\Omega}|u_n|^q dx
\end{split}
\]
which implies that $(u_n)_n$ is bounded in $W_0^{1,p}(\Omega)$.

The remainder of the proof follows, as in $(iii)$ of Lemma \ref{lemma1}, by arguing as in \cite[Proposition 4.6]{CP2}.
\end{proof}
Now, we state a growth estimate on $A(x,t,\xi)$ which is crucial for proving that $\J$ has a ''mountain pass'' geometry.
\begin{lemma}\label{lemma2}
Assume that $(H_0)$, $(H_2)$, $(H_3)$ and $(H_7)$ hold.

Taken $(t,\xi)\in \R\times\R^N$, a.e. in $\Omega$ and for all $\sigma\geq 1$ it is
\begin{equation}\label{condizione8}
A(x,\sigma t, \sigma \xi)\leq \sigma^{s-\frac{\alpha_3}{\eta_1}}A(x,t,\xi).
\end{equation}
\end{lemma}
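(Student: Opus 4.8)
The plan is to fix the point and reduce the claimed inequality to a one-dimensional ordinary differential inequality in the scaling parameter. Concretely, I would fix $x$ (a.e.\ in $\Omega$) and $(t,\xi)\in\R\times\R^N$, and define
\[
\psi(\sigma)=A(x,\sigma t,\sigma\xi),\qquad \sigma>0.
\]
By $(H_0)$ the map $A(x,\cdot,\cdot)$ is $C^1$, so $\psi$ is differentiable and the chain rule gives
\[
\psi'(\sigma)=a(x,\sigma t,\sigma\xi)\cdot\xi+A_t(x,\sigma t,\sigma\xi)\,t.
\]
With $\gamma:=s-\frac{\alpha_3}{\eta_1}$, the target estimate \eqref{condizione8} reads $\psi(\sigma)\le\sigma^{\gamma}\psi(1)$ for all $\sigma\ge1$, so it suffices to produce and integrate a suitable differential inequality for $\psi$.

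First I would convert $(H_7)$, evaluated at the scaled point $(\sigma t,\sigma\xi)$, into an inequality for $\psi$. Multiplying the displayed expression for $\psi'(\sigma)$ by $\sigma$ and recognising $a(x,\sigma t,\sigma\xi)\cdot(\sigma\xi)+A_t(x,\sigma t,\sigma\xi)(\sigma t)=\sigma\psi'(\sigma)$, assumption $(H_7)$ yields
\[
s\,\psi(\sigma)-\sigma\,\psi'(\sigma)\ \ge\ \alpha_3\,\sigma\,a(x,\sigma t,\sigma\xi)\cdot\xi .
\]
To bound the right-hand side from below, I would invoke $(H_3)$ at the scaled point, namely $\psi(\sigma)=A(x,\sigma t,\sigma\xi)\le\eta_1\,a(x,\sigma t,\sigma\xi)\cdot(\sigma\xi)$; since $a(x,\sigma t,\sigma\xi)\cdot\xi\ge0$ by $(H_2)$--$(H_3)$ (cf.\ Remark \ref{remark2}) and $\sigma>0$, this gives $\alpha_3\,\sigma\,a(x,\sigma t,\sigma\xi)\cdot\xi\ge\frac{\alpha_3}{\eta_1}\psi(\sigma)$. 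Combining the two displays produces
\[
\sigma\,\psi'(\sigma)\ \le\ \Big(s-\frac{\alpha_3}{\eta_1}\Big)\psi(\sigma)=\gamma\,\psi(\sigma),\qquad \sigma>0 .
\]

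Finally I would integrate this by the standard integrating-factor argument. Since $\psi\ge0$ by $(H_2)$ and
\[
\frac{d}{d\sigma}\big(\sigma^{-\gamma}\psi(\sigma)\big)=\sigma^{-\gamma-1}\big(\sigma\,\psi'(\sigma)-\gamma\,\psi(\sigma)\big)\le0,
\]
the function $\sigma\mapsto\sigma^{-\gamma}\psi(\sigma)$ is non-increasing on $(0,+\infty)$. Hence for every $\sigma\ge1$ one has $\sigma^{-\gamma}\psi(\sigma)\le\psi(1)$, i.e.\ $\psi(\sigma)\le\sigma^{\gamma}\psi(1)$, which is exactly \eqref{condizione8}. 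The only delicate points are bookkeeping: justifying the chain rule through $(H_0)$ and keeping track of the extra factor $\sigma$ introduced by scaling both arguments of $A$, together with checking that the sign conditions from $(H_2)$ and $(H_3)$ are in force so that $(H_7)$ can be rewritten purely in terms of $\psi$ and the monotonicity step is legitimate. Beyond this, the argument is a one-line ODE comparison, and I expect no serious obstacle.
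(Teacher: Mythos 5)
Your proof is correct and takes essentially the same route as the paper's: differentiate $\sigma\mapsto A(x,\sigma t,\sigma\xi)$ via $(H_0)$, combine $(H_3)$ and $(H_7)$ at the scaled point to get the differential inequality $\sigma\psi'(\sigma)\le\left(s-\frac{\alpha_3}{\eta_1}\right)\psi(\sigma)$, and integrate. Your integrating-factor step simply makes explicit the ``simple calculation'' the paper leaves to the reader (and has the minor virtue of avoiding any division by $\psi$, which could vanish).
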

\begin{proof}
Taken $(t,\xi)\in \R\times\R^N$, for any $\sigma\in\R$ and a.e. in $\Omega$, from $(H_0)$ it is
\[
\frac{d}{d\sigma}A(x,\sigma t, \sigma \xi)=A_t(x,\sigma t, \sigma \xi)t+a(x,\sigma t, \sigma \xi)\cdot\xi.
\]
Thus, by $(H_3)$ and $(H_7)$ it is
\[
\frac{d}{d\sigma}A(x,\sigma t, \sigma \xi)\leq \left(s-\frac{\alpha_3}{\eta_1}\right)\frac{1}{\sigma}A(x,\sigma t, \sigma \xi) \quad \mbox{ for all $\sigma>0$.}
\]
Hence, $(H_2)$ and simple calculation imply \eqref{condizione8}.
\end{proof}

\begin{proof}[Proof of the Theorem \ref{theorem2}] 
From Proposition \ref{proposition2} $\J$ satisfies condition $(wCPS)$ in $\R$. Moreover, Theorem \ref{theorem1} implies that for $\lambda \in ]0,\lambda_1[$ with $\lambda_1$ small enough $\J$ has a local minimum point $u_0\not\equiv 0$ such that $\J(u_0)<0$.

From \eqref{condizione3}, recalling the properties of $h$, if $\lambda \in ]0,\lambda_1[$ there exist $R, \alpha>0$ such that
\[
u\in X, \|u\|_{p}=R\Longrightarrow \J(u)\geq \alpha>0=\J(0)>\J(u_0).
\] 

Now, we prove that there exists $v\in X$ with 
\begin{equation}\label{condizione9}
\|v\|_p>\rho \quad \mbox{ and } \quad \J(v)<\J(0). 
\end{equation}
In fact, fixed $v_1\in X, v_1\not\equiv 0$, from \eqref{condizione8} it results
\[
\begin{split}
\J(\sigma v_1)&=\int_{\Omega}A(x,\sigma v_1, \sigma \nabla v_1)\ dx - \lambda\frac{\sigma^q}{q}|v_1|_q^q-\frac{\sigma^s}{s}|v_1|_s^s\\[10pt]
&\leq \sigma^{s-\frac{\alpha_3}{\eta_1}}\int_{\Omega}A(x,v_1,  \nabla v_1) dx-\frac{\sigma^s}{s}|v_1|_s^s\to -\infty \quad \mbox{ if $\sigma\to +\infty$,}
\end{split}
\] 
thus \eqref{condizione9} follows with $v=\bar{\sigma}v_1$ with $\bar{\sigma}$ large enough.

Hence, from Proposition \ref{Mountain Pass Theorem}, $\J$ has a Mountain Pass critical point $u^{*}$ such that $$\J(u^{*})\geq \alpha>0=\J(0)>\J(u_0).$$ Clearly, $u^{*}$  is a non null solution of problem \eqref{problema2}, different from $u_0$.

Finally, denoting by $\J_{+}$ the functional introduced in Section \ref{section3} and reasoning as above, it follows that $\J_{+}$ has at least two nontrivial critical points which are positive critical points of $\J$, i.e., positive solutions of problem \eqref{problema2}. Similarly, $\J_{-}$ has at least two nontrivial critical points which are negative critical points of $\J$, i.e., negative solutions of problem \eqref{problema2}.
\end{proof}

\end{document}